\def\epsfig#1{}
\newtheorem{theorem}{Theorem}[section]
\newtheorem{claim}[theorem]{Claim}
\newtheorem{corollary}[theorem]{Corollary}
\newtheorem{definition}[theorem]{Definition}
\newtheorem{example}[theorem]{Example}
\newtheorem{lemma}[theorem]{Lemma}
\newenvironment{proof}[1][Proof]{\textbf{#1.} }{\ \rule{0.5em}{0.5em}}
\newcommand\Azero{{\bf (A0)}}
\newcommand\Aone{{\bf (A1)}}
\newcommand\Atwo{{\bf (A2)}}
\newcommand\Athree{{\bf (A3)}}
\newcommand\Afour{{\bf (A4)}}
\newcommand\Athrees{$\bf (A3)_s$}
\newcommand\Afours{$\bf (A4)_s$}
\newcommand{\Hn}{{\mathcal H}^n}
\newcommand{\X}{{M^+}}
\newcommand{\Y}{{M^-}}
\newcommand{\dom}{\mathop{\rm Dom}}
\newcommand{\graph}{\mathop{\rm Graph}}
\newcommand{\spt}{\mathop{\rm spt}}
\newcommand{\Hess}{\mathop{\rm Hess}}
 \newcommand{\0}{\mathbf{0}}
\newcommand{\R}{\mathbf{R}}
\newcommand{\N}{\mathbf{N}}
\begin{document}

\author{
Robert J. McCann\thanks{
Department of Mathematics, University of Toronto, Toronto Ontario M5S 2E4 Canada,
{\tt mccann@math.toronto.edu}}}

\title{A glimpse into the differential topology and geometry of
optimal transport\thanks{% via cross-differences of the cross-difference
The author is pleased to acknowledge the support of
Natural Sciences and Engineering Research Council of Canada Grants 217006-08.
\copyright 2012 by the author.}}
\date{\today}

\maketitle

\begin{abstract}
This note exposes the differential topology and geometry underlying
some of the basic phenomena of optimal transportation.  It surveys
basic questions concerning Monge maps and Kantorovich measures:
existence and regularity of the former,  uniqueness of the latter, and
estimates for the dimension of its support,  as well as the associated
linear programming duality.  It shows the answers to these questions concern
the differential geometry and topology of the chosen transportation cost.
It also establishes new connections --- some heuristic and others rigorous ---
based on the properties of the cross-difference of this cost, and its Taylor
expansion at the diagonal.
\end{abstract}

%\tableofcontents

\section{Introduction}

%\marginpar{frames phenomenology, rather than methodology (duality, Monge-Amp\`ere,...)}

What is optimal transportation?
%The optimal transportation problem of Monge and Kantorovich
This subject, reviewed by Ambrosio and Gigli \cite{AmbrosioGigli11p},
McCann and Guillen \cite{McCannGuillen10p},
Rachev and Ruschendorf \cite{RachevRuschendorf98},
and Villani \cite{Villani03} \cite{Villani09} among others,
has become a topic of much scrutiny in recent years,
%This attention has been stimulated in part by
driven by applications both within and outside mathematics.
However,  the problem has also
lead to the development of its own theory, in which a number of challenging
questions arise and some fascinating answers have been discovered.
The present manuscript is intended to reveal some of the differential topology and
geometry underlying these questions, their solution and variants, and give some novel
and simple yet powerful heuristics for a few highlights from the literature that we survey.
It attempts to frame the phenomenology of the subject, without
delving deeply into many of the methodologies --- both novel and standard ---
which are used to pursue it.
The new heuristics are largely based on the properties of the
cross-difference \eqref{cross-difference},
and its Taylor expansion \eqref{Taylor} at the diagonal.
% $(x,y)=(x_0,y_0)$. $  \in N = M^+ \times M^-$ in $N^2$.

Given Borel probability measures $\mu^\pm$
on complete separable metric spaces $M^\pm$,
and a continuous bounded function $c(x,y)$
representing the cost per unit mass transported from $x \in M^+$ to $y\in M^-$,
the basic question is to correlate the measures $\mu^+$ and $\mu^-$ so as to
minimize the total transportation cost.  In Monge's 1781 formulation \cite{Monge81},
we seek to minimize
\begin{equation}\label{Monge cost}
cost(G) := \int_{M^+} c(x,G(x)) \,d\mu^+(x)
\end{equation}
among all Borel maps $G: M^+ \longrightarrow M^-$ pushing $\mu^+$ forward to $\mu^-=G_\#\mu^+$,
where the pushed-forward measure is defined by $G_\#\mu^+(Y) = \mu^+(G^{-1}(Y))$ for each
$Y \subset M^-$.  This question is interesting,  because it leads to canonical ways to
reparameterize one distribution of mass with another.  When
the probability measures are given by densities $d\mu^\pm(x) = f^\pm(x)dx$
on manifolds $M^\pm$, we can expect $G$ to satisfy the Jacobian equation
$\pm \det[\partial G^i/\partial x^j]= f^+(x) / f^-(G(x))$.  Additional desirable properties
of $G$ %--- such as irrotationality \cite{Brenier91} ---
can sometimes be guaranteed by a suitable choice of transportation cost;
for example, $G$ will be irrotational for the quadratic cost
$c(x,y) = \frac{1}{2}|x-y|^2$ on Euclidean space \cite{Brenier91}.
For subsequent purposes, we will often assume the cost $c(x,y)$ and
manifolds $M^\pm$ to be smooth,  but quite general otherwise.

In Kantorovich's 1942 formulation,  we seek to minimize
\begin{equation}\label{Kantorvich cost}
cost(\gamma) := \int_{M^+ \times M^-} c(x,y) \,d\gamma(x,y)
\end{equation}
over all joint measures $\gamma \ge 0$ on $M^+ \times M^-$ having $\mu^+$ and $\mu^-$
as marginals.  The form of the latter problem ---
minimize the linear functional $cost(\gamma)$
on the convex set $\Gamma(\mu^+,\mu^-) := \{\gamma \ge 0 \mid \pi^\pm_\#\gamma =\mu^\pm\}$,
where $\pi^+(x,y) = x$ and $\pi^-(x,y) = y$ --- makes it easy to show the Kantorovich infimum
is attained. %(at least when $c$ is bounded).
A result of Pratelli \cite{Pratelli07} following Ambrosio %\cite{Ambrosio03}
and Gangbo %\cite{Gangbo99}
asserts that its value coincides with the
Monge infimum
\begin{equation}\label{K min = M inf}
\min_{\gamma \in \Gamma(\mu^+,\mu^-)} cost(\gamma) = \inf_{\mu^- = G_\# \mu^+ } cost(G)
\end{equation}
if $c$ is continuous and $\mu^+$ is free of atoms.
However it is not straightforward to establish uniqueness of the Kantorovich minimizer,
nor whether the Monge infimum is attained,
and if so, whether the mapping $G$ which attains it is continuous.  A sufficient
condition \Aone$'_+$ for existence (and uniqueness) of optimizers $G$ (and  $\gamma$)
was found by Gangbo \cite{Gangbo95} and Levin \cite{Levin99},
building on work of many authors, including
Brenier, Caffarelli, Gangbo, McCann, Rachev and R\"uschendorf.
When $M^\pm \subset \R^n$ are the closures of open domains,
sufficient conditions  for the existence of a {\em smooth} minimizer
$G:M^+ \longrightarrow M^-$ were provided by Ma, Trudinger and Wang \cite{MaTrudingerWang05},
building on work of Delano\"e, Caffarelli, Urbas and Wang, and later refined
through work of Delano\"e, Figalli, Ge, Kim, Liu, Loeper, McCann,
Rifford, Trudinger, Villani and Wang,
among others.
%\cite{DelanoeGe10} \cite{FigalliKimMcCann} \cite{FigalliRiffordVillani10}
%\cite{Liu09} \cite{LiuTrudingerWang10} \cite{Loeper09} \cite{TrudingerWang09b}.
See Appendix \ref{A:MTW} for a statement of their conditions \Azero$'$-\Afour$'$.
% --- the
%first of which \Aone\ had previously been known to guarantee existence of a merely
%Borel solution $G$ minimizing Monge's cost \cite{Gangbo95} \cite{Levin99}.
 At the same time,
we introduce a new but equivalent formulation of conditions \Azero$'$-\Afour$'$ in
terms of the cross-difference \eqref{cross-difference}, which emphasizes
their purely topological \Azero-\Atwo\ and geometric \Athree-\Afour\ nature,
exposing their naturality and relevance. % for the problem.
This process of reformulation, begun with Kim in \cite{KimMcCann10}, is completed here,
as part of a series of questions and responses.

\section{Why do Kantorovich minimizers concentrate on low-dimensional sets?}

Abstractly,  one expects a linear functional $cost(\gamma)$ on a convex set
$\Gamma(\mu^+,\mu^-)$ to attain its infimum at one of the extreme points.  So it
is interesting to understand the extreme points of $\Gamma(\mu^+,\mu^-)$.
Such extreme points are sometimes called {\em simplicial} measures.
Despite much progress, surveyed in \cite{AhmadKimMcCann11}, a characterization
of simplicial measures in terms of their support has long remained elusive and is probably
too much to hope for.  Recall
that a measure $\gamma\ge0$ is {\em simplicial} if it is not
the midpoint of any segment in $\Gamma(\pi^+_\#\gamma,\pi^-_\#\gamma)$.    Ahmad, Kim and
McCann \cite{AhmadKimMcCann11} showed each simplicial measure $\gamma$
%in $\Gamma(\mu^+,\mu^-)$
vanishes outside the union of a graph
$\{(x,G(x)) \mid x \in M^+\}$
and an antigraph $\{(H(y),y) \mid y \in M^-\}$,
generalizing Hestir and Williams \cite{HestirWilliams95}
result from the special case of Lebesgue measure $\mu^\pm$ on the unit interval
$M^\pm=[0,1]$.
This shows $\gamma$ concentrates on a set whose topological dimension should not exceed
$\max\{n^+,n^-\}$, where $n^\pm = \dim M^\pm$.
Taking $n^+ \le n^-$ without loss of generality,  if the measure
$\mu^-$ fills the space $M^-$,
then $\gamma$ cannot concentrate on any subset of lower dimension than $n^-$,  so
it would seem we have identified the topological dimension of the set on which $\gamma$
concentrates to be precisely $n^- = \max\{n^+,n^-\}$.  Unfortunately,  this simple
argument is somewhat deceptive.  Although the graph and antigraph
of \cite{HestirWilliams95} and \cite{AhmadKimMcCann11} enjoy further structure,
they are not generally $\gamma$-measurable; a priori it is conceivable that
their closures might actually fill the product space $N=M^+ \times M^-$.

With some assumptions on the topology of the cost function $c$ and spaces $M^\pm$,
it is possible to better estimate the size of support of the particular extreme
points of interest using the more robust
notion of Hausdorff dimension.
The basic object of geometrical relevance %to the Kantorovich problem is
will be the support $\spt \gamma$ of the Kantorovich optimizer,  defined as the smallest closed
subset $S \subset M^+ \times M^+$ carrying the full mass of $\gamma$.
If the Monge infimum \eqref{K min = M inf} is attained by a map
$G:M^+ \longrightarrow M^-$
and the Kantorovich minimizer is unique,  it will turn out that $\spt \gamma$
agrees ($\gamma$-a.e.) with the graph of $G$;  when this map is a diffeomorphism,
then $\gamma$ concentrates
on a subset of dimension $n^+= \dim M^+$ in $M^+ \times M^-$. We shall show why this
might be expected more generally,  assuming $M^\pm$ to be (smooth) manifolds henceforth.

Setting $N := M^+ \times M^-$ and $S= \spt \gamma$,  consider the cross-difference
 \cite{McCann99}
\begin{equation}\label{cross-difference}
\delta(x,y; x_0,y_0) := c(x,y_0) + c(x_0,y) - c(x,y) - c(x_0,y_0)
\end{equation}
defined on $N^2$.
An observation --- special cases of which date back to Monge --- asserts
$\delta(x,y; x_0,y_0) \ge 0$ on $S^2 \subset N^2$;  in other words,
we cannot lower the cost by exchanging partners between $(x,y)$ and $(x_0,y_0)$;
for a modern proof,  see Gangbo and McCann \cite{GangboMcCann96}.
This fact is called the {\em $c$-monotonicity} of $S$.

If $c \in C^2$, then $(x_0,y_0) \in N$ is a critical point for the function
$\delta^0(x,y) := \delta(x,y;x_0,y_0)$,  whose Hessian
\begin{equation}\label{h=Hessian}
h = \textstyle \frac{1}{2} \Hess\nolimits \delta^0(x_0,y_0)
\end{equation}
is then well-defined (though it need not be at points $(x,y) \ne (x_0,y_0)$
which are non-critical).
Now for $(x_0,y_0) \in S$, we have $\delta^0(x,y) \ge 0$ on $S$, with equality at $(x_0,y_0)$.
On the other hand,  the symmetries of the cross-difference $\delta$ ensure that the Hessian
$h$ contributes the only non-vanishing term in the second order Taylor expansion of $\delta^0$:
more explicitly
\begin{eqnarray}\label{Taylor}
\nonumber
&\delta^0&(x_0+\Delta x,y_0 +\Delta y) \\
&=&%\phantom{-}
h((\Delta x, \Delta y),(\Delta x,\Delta y)) + o(|\Delta x|^2 +|\Delta y|^2) \\
&=& - \sum_{i=1}^{n^+}\sum_{j=1}^{n^-}
D^2_{x^iy^j} c(x_0,y_0) \Delta x^i \Delta y^j + o(|\Delta x|^2 +|\Delta y|^2)
\nonumber
\end{eqnarray}
as $(\Delta x,\Delta y)\to 0$.
%where the Einstein summation convention is in effect, $i$ and $j$ being summed from 1 to
%$n^+$ and to $n^- = \dim M^-$ respectively.
%Our first task is to demonstrate the various ways in which
It is then not so surprising to discover that the Hessian $h$ controls
the geometry and dimension of the support of any Kantorovich optimizer $\gamma$
near $(x_0,y_0)$ in various ways, as we now make precise
%.  These results were first
%derived using a less intuitive approach
following Pass \cite{Pass12} and my joint works with Kim \cite{KimMcCann10}, Pass
and Warren \cite{McCannPassWarren12}.
%Pass \cite{PassPhD}, Kim and McCann \cite{KimMcCann10}.
%and McCann, Pass, and Warren \cite{McCannPassWarren12}.

Let $(k_+,k_0,k_-)$ be the $(+,0,-)$ signature of $h$, meaning
$k_+,k_0,k_- \in \N$ count the number of positive, zero, and negative eigenvalues of $h$
in one and hence any choice of coordinates.

%\marginpar{rank 2k?}
\begin{claim}[Signature and rank]\label{C:signature and rank}
For each $(x_0,y_0) \in N$, the signature of \eqref{h=Hessian} is given by
$(k_+,k_0,k_-) = (k,n_+ + n_- - 2k, k)$ where $k \le \min\{n^+,n^-\}$ and $n^\pm = \dim M^\pm$.
We may henceforth refer to the integer $2k$ as the {\em rank} of $c \in C^2$ (and of $h$)
at $(x_0,y_0)$; it depends lower-semicontinuously on $(x_0,y_0)$.
\end{claim}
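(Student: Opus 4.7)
The plan is to read off the matrix of $h$ from the Taylor expansion \eqref{Taylor} in local coordinates, observe that it has block-antidiagonal form, and then invoke the standard fact that such matrices have $\pm$-symmetric spectrum arising from the singular values of the off-diagonal block.

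Concretely, fix coordinates near $(x_0,y_0)$ and let $A$ denote the $n^+\times n^-$ matrix $A_{ij} = D^2_{x^iy^j}c(x_0,y_0)$. The quadratic form in \eqref{Taylor} may be written as
\begin{equation*}
h((\Delta x,\Delta y),(\Delta x,\Delta y)) = -\Delta x^T A \,\Delta y,
\end{equation*}
so the symmetric matrix representing $h$ in these coordinates is
\begin{equation*}
M = -\tfrac{1}{2}\begin{pmatrix} 0 & A \\ A^T & 0 \end{pmatrix}.
\end{equation*}
I would then compute the spectrum of $M$ via the singular value decomposition $A = U\Sigma V^T$, where $\Sigma$ has $k := \mathop{\rm rank} A$ nonzero singular values $\sigma_1,\ldots,\sigma_k > 0$. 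Using the orthogonal change of basis $\mathop{\rm diag}(U,V)$ (which leaves the signature invariant by Sylvester's law of inertia) reduces $M$ to block form with $k$ copies of $-\tfrac{1}{2}\bigl(\begin{smallmatrix} 0 & \sigma_i \\ \sigma_i & 0\end{smallmatrix}\bigr)$ plus a zero block of size $n^+ + n^- - 2k$. Each $2\times 2$ block has eigenvalues $\pm\sigma_i/2$, one positive and one negative, yielding exactly $k$ positive eigenvalues, $k$ negative eigenvalues, and $n^+ + n^- - 2k$ zero eigenvalues. Since $k = \mathop{\rm rank} A \le \min\{n^+,n^-\}$, this establishes the signature assertion and identifies $2k$ as the rank of $h$ (which equals $2\mathop{\rm rank} A$).

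For the lower semicontinuity of the rank, I would appeal to the elementary fact that the rank of a continuously varying matrix is a lower semicontinuous function: if $\mathop{\rm rank} A(x_0,y_0) = k$, then some $k\times k$ minor of $A(x_0,y_0)$ is nonzero, hence remains nonzero in a neighborhood by continuity of $D^2_{xy}c$, so $\mathop{\rm rank} A(x,y) \ge k$ nearby. Consequently $2k$ is lower semicontinuous in $(x_0,y_0)$.

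There is no real obstacle here beyond bookkeeping; the content is entirely the standard SVD of the off-diagonal block, and the coordinate-independence of the resulting signature is automatic from Sylvester's law. The one point worth double-checking is that the quadratic form in \eqref{Taylor} genuinely has no diagonal $\Delta x^i\Delta x^j$ or $\Delta y^i\Delta y^j$ contributions — this is exactly the symmetry of the cross-difference $\delta$ emphasized in the text ($\delta$ vanishes whenever $x=x_0$ or $y=y_0$), so only the mixed partials survive.
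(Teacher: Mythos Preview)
Your argument is correct, and it rests on the same structural observation as the paper's --- that in local coordinates the matrix of $h$ is block antidiagonal with off-diagonal block $A = D^2_{xy}c$ --- but you extract the signature differently. The paper argues via the involution $(\Delta x,\Delta y)\mapsto(-\Delta x,\Delta y)$, which reverses the sign of the quadratic form and hence matches positive eigenspaces bijectively with negative ones, giving $k_+=k_-$; the bound $k\le\min\{n^+,n^-\}$ then comes from observing that a positive eigenvector has its $\Delta x$-component determined by its $\Delta y$-component (and conversely). Your route through the singular value decomposition is more explicit: it not only counts the signs but actually produces the eigenvalues $\pm\sigma_i/2$ and identifies $k$ with $\mathop{\rm rank} A$ outright, which makes the inequality $k\le\min\{n^+,n^-\}$ immediate. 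The paper's version is marginally more self-contained (no appeal to SVD), while yours is cleaner bookkeeping and makes the dependence on $A$ transparent --- in particular, your lower-semicontinuity argument via a nonvanishing minor is more concrete than the paper's one-line appeal to $c\in C^2$.
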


\begin{proof}
The sum $k_+ + k_0 + k_- = n_+ + n_-$ must agree with
the total dimension of $N=M^+ \times M^-$.  Since any
perturbation direction $(\Delta x, \Delta y)$ in which $\delta^0$ grows,  corresponds
to another direction $(-\Delta x, \Delta y)$ in which $\delta^0$ shrinks \eqref{Taylor},
it follows that $k_+=k_-$.
Thus $(k_+,k_0,k_-) = (k,n_+ + n_- - 2k, k)$.
%  and we streamline notation by
%referring to the integer $k\ge0$ as the index of $h$; we shall presently see

In fact, since the matrix $h$ is symmetric, in any coordinate system we can find a
basis of orthogonal eigenvectors for $h$.  The preceding argument shows that if
$(\Delta x,\Delta y)$ is an eigenvector with eigenvalue $\lambda>0$ then
$(-\Delta x,\Delta y)$ is an eigenvector with eigenvalue
$-\lambda$.  In this case $\Delta x=- \frac{1}{2\lambda}\sum_{j=1}^{n^-} D^2_{xy^j} c(x_0,y_0) \Delta y^j$
is determined by $\Delta y$ and vise versa,
so at most $k \le \min\{n^+,n^-\}$ eigenvectors can correspond to
positive eigenvalues \cite{Pass12}.

Lowersemicontinuity of $k=k(x_0,y_0)$ follows from the fact that $c \in C^2$.
\end{proof}

%We henceforth refer to the integer $k$ as the {\em index} of $h$ at $(x_0,y_0)$.
The Hessian $h$ of the cross-difference also determines the {\em spacelike,
timelike,} and {\em lightlike}
cones $\Sigma^+, \Sigma^-$ and $\Sigma^0 \subset T_{(x_0,y_0)}N$
according to the definitions $\Sigma^\pm = \{V \in T_{(x_0,y_0)} N\mid \pm h(V,V) \ge 0\}$
and $\Sigma^0 = \Sigma^+ \cap \Sigma^-$.

\begin{definition}[Spacelike, timelike, lightlike]
A subset $S \subset N$ is {\em spacelike} if each (not necessarily continuous)
curve $t\in[-1,1] \longmapsto z(t) \in S$ differentiable at $t=0$
satisfies $h(\dot z(0),\dot z(0))\ge 0$, where $\dot z(0)$ is the tangent vector
and $h$ denotes the Hessian
\eqref{h=Hessian}  at $(x_0,y_0)=z(0)$.
  Similarly, $S$ is timelike (or lightlike)
if the inequality is reversed (or if both inequalities hold).
\end{definition}

Since we want to allow sets $S$ which are rough and potentially incomplete,
it is important to permit curves in the definition above whose continuity at $t=0$
may not extend to any neighbourhood of $t=0$.

\begin{lemma}[$c$-monotone implies spacelike]
Any $c$-monotone set $S \subset N$ is spacelike. %,  where differentiable.
\end{lemma}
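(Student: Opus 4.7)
The plan is to exploit the fact that $c$-monotonicity forces the function $\delta^0$ to have a constrained local minimum along the curve $z(t)$, and then match this against the second-order Taylor expansion \eqref{Taylor} of $\delta^0$ at its critical point $(x_0,y_0)$.

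First I would fix $(x_0,y_0) = z(0) \in S$ and any curve $t \in [-1,1] \mapsto z(t) \in S$ that is differentiable at $t=0$, with tangent vector $v := \dot z(0) \in T_{(x_0,y_0)}N$. By definition, $\delta^0(z(0)) = \delta(x_0,y_0;x_0,y_0) = 0$, and $c$-monotonicity of $S$ gives $\delta^0(z(t)) \ge 0$ for every $t \in [-1,1]$. Thus $t=0$ is a minimizer of the scalar function $t \mapsto \delta^0(z(t))$, even though this function need not be continuous away from $t=0$.

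Next I would plug the expansion $z(t) = z(0) + t v + r(t)$ with $r(t) = o(t)$ into the Taylor formula \eqref{Taylor}. Since $(x_0,y_0)$ is a critical point of $\delta^0$ and $\delta^0 \in C^2$, expanding gives
\begin{equation*}
\delta^0(z(t)) = h\bigl(tv + r(t),\, tv + r(t)\bigr) + o\bigl(|tv+r(t)|^2\bigr)
= t^2\, h(v,v) + o(t^2)
\end{equation*}
as $t \to 0$, where the cross terms $2t\, h(v,r(t))$ and $h(r(t),r(t))$ are absorbed into $o(t^2)$ using the bilinearity of $h$ and $r(t) = o(t)$. Dividing by $t^2$ and letting $t \to 0$ through nonzero values then forces $h(v,v) \ge 0$, which is exactly the spacelike condition.

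There is not really a serious obstacle here beyond keeping careful track of the remainder terms: the only subtle point is that $z$ is only assumed differentiable (hence continuous) at the single point $t=0$, so one cannot do calculus on the composite $t \mapsto \delta^0(z(t))$ directly. What saves the argument is that the Taylor expansion of $\delta^0$ itself is a statement about its values at nearby points in $N$, and differentiability of $z$ at $t=0$ is all that is needed to substitute $z(t)$ for $(x_0+\Delta x, y_0 + \Delta y)$ in \eqref{Taylor}.
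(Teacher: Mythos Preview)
Your proof is correct and follows exactly the same approach as the paper: use $c$-monotonicity to get $\delta^0(z(t)) \ge 0$, then invoke the Taylor expansion \eqref{Taylor} at the critical point $(x_0,y_0)$ to conclude $h(v,v) \ge 0$. The paper's argument is simply a terser version of yours, omitting the explicit handling of the remainder terms and the discussion of why mere differentiability at $t=0$ suffices.
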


\begin{proof}
Take any curve $(x(t),y(t)) \in S$, not necessarily continuous but differentiable at
$(x_0,y_0)=(x(0),y(0))$, with tangent vector $V=(\dot x(0),\dot y(0))$.
Since $\delta^0(x(t),y(t)) \ge 0$, from \eqref{h=Hessian}--\eqref{Taylor}
we see $h(V,V) \ge 0$, as desired.
\end{proof}

\begin{corollary}[Dimensional bounds]

If $c$ is $C^2$ and has rank $2k$ at a point $(x_0,y_0)$ where
$S$ has a well-defined tangent space $T$,  then
$c$-monotonicity of $S \subset N$ implies the dimension of this
tangent space satisfies
$\dim T \le n_+ + n_- - k$.
\end{corollary}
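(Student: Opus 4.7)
The plan is to reduce the corollary to a standard linear-algebra fact about symmetric bilinear forms of known signature, using the two immediately preceding results as the only inputs.

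First I would invoke the preceding lemma: since $S$ is $c$-monotone, it is spacelike at $(x_0,y_0)$, so every tangent vector $V \in T$ satisfies $h(V,V) \ge 0$. In other words, $T$ is a subspace of $T_{(x_0,y_0)}N$ on which the symmetric bilinear form $h$ is positive semidefinite. Our task therefore becomes purely algebraic: bound the dimension of any subspace on which $h \ge 0$, given the signature of $h$.

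Next I would apply Claim \ref{C:signature and rank}, which tells us that $h$ has signature $(k_+,k_0,k_-) = (k, n_+ + n_- - 2k, k)$. The key linear-algebra observation, essentially Sylvester's law of inertia, is that the maximal dimension of a subspace on which a symmetric bilinear form is positive semidefinite equals the number of nonnegative eigenvalues, i.e.\ $k_+ + k_0$. To see this, decompose $T_{(x_0,y_0)}N$ as an $h$-orthogonal direct sum of a maximal negative-definite subspace $E_-$ (of dimension $k_- = k$) and its $h$-orthogonal complement $E_{\ge 0}$ (of dimension $k_+ + k_0 = n_+ + n_- - k$). If a subspace $W$ with $h|_W \ge 0$ had dimension exceeding $n_+ + n_- - k$, a dimension count would force $W \cap E_- \ne \{0\}$, producing a vector $V \ne 0$ with $h(V,V) < 0$, a contradiction.

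Combining these two observations gives $\dim T \le k_+ + k_0 = n_+ + n_- - k$, which is the desired bound. The only step that requires any care is the invocation of the linear-algebra fact, and this is entirely routine; there is no serious obstacle, since both the spacelike property of $T$ and the signature of $h$ have already been established in the excerpt.
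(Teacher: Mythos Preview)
Your proof is correct and follows essentially the same approach as the paper's: both invoke the preceding lemma to conclude $T$ is spacelike, read off the signature from Claim~\ref{C:signature and rank}, and then apply a standard linear-algebra bound on the dimension of a subspace on which $h \ge 0$. The only cosmetic difference is that the paper cites the Courant--Fischer min-max formula where you use a direct dimension-count against a maximal negative-definite subspace, but these are equivalent routes to the same inequality $\dim T \le k_+ + k_0$.
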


\begin{proof}
Fix coordinates on $N$.
As a consequence of the (Courant-Fischer) min-max formula for eigenvalues of $h$
at $(x_0,y_0)$, the signature $(k_+,k_0,k_-)=(k,n_++n_--2k,k)$ of $h$ limits
the maximal number of linearly independent tangent vectors to $N$
which are not timelike to $k_+ + k_0 = n_+ + n_- - k$.
Since the preceding lemma shows the tangent space $T$ of $S$
to be spanned by such a set of vectors,  its dimension satisfies the asserted bound.
\end{proof}

%\marginpar{Pass12 $\to$ ?}
The following much stronger result of Pass \cite{Pass12} asserts $S$ is
contained in a spacelike Lipschitz submanifold of the prescribed dimension ---
hence implies differentiability a.e.\ as a consequence instead of a hypothesis.
The case $k=n_+=n_-$ was worked out earlier by
McCann, Pass and Warren \cite{McCannPassWarren12},
by adapting an idea of Minty \cite{Minty62} \cite{AlbertiAmbrosio99}
from the special case $c(x,y) = - x\cdot y$.

\begin{theorem}[Rectifiability \cite{Pass12}]\label{T:rectifiability}
If $c$ has rank $2k$ at $(x_0,y_0)$ and is $C^2$ nearby,
then on a (possibly smaller) neighbourhood $N_0 \subset M^+ \times M^-$ of $(x_0,y_0)$,
$c$-monotonicity of $S \subset N_0$ implies $S \subset L$ where $L\subset N_0$ is a
spacelike Lipschitz submanifold of dimension $\dim L \le n_+ + n_- - k$ with
$n_\pm = \dim M^\pm$.
% in the $n_+ +n_-$ dimensional manifold $N_0$.
\end{theorem}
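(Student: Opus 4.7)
The plan is to extend the Minty-trick strategy McCann-Pass-Warren used in the maximal-rank case $k=n_+=n_-$. The goal is to find $C^1$ coordinates near $(x_0,y_0)$ in which $S$ projects injectively, with Lipschitz inverse, onto an $(n_+ + n_- - k)$-dimensional ``spacelike-plus-null'' subspace; the graph of that inverse will be the desired submanifold $L$.

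First I would diagonalize the mixed Hessian $A := D^2_{xy}c(x_0,y_0)$ by an $O(n_+)\times O(n_-)$ change of variables coming from its singular value decomposition, so that $A = \mathrm{diag}(\sigma_1,\ldots,\sigma_k,0,\ldots,0)$ with $\sigma_i > 0$. A direct computation on the block form $h = \tfrac{1}{2}\begin{pmatrix} 0 & -A \\ -A^T & 0\end{pmatrix}$ then exhibits the eigendecomposition: for $i\le k$ the vectors $(\partial_{x_i}\pm \partial_{y_i})/\sqrt{2}$ are eigenvectors with eigenvalues $\mp\sigma_i/2$ (spacelike and timelike respectively), while the remaining $\partial_{x_i}$ for $i>k$ and $\partial_{y_j}$ for $j>k$ span the null eigenspace. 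A linear change to the Minty coordinates $u_i = (x_i-y_i)/\sqrt{2}$, $w_i = (x_i+y_i)/\sqrt{2}$ (keeping the null coordinates intact) puts $h$ at $(x_0,y_0)$ into the standard form $h(V,V) = \sum_{i=1}^k \sigma_i (V_{u_i}^2 - V_{w_i}^2)$.

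The crux is to extract a Lipschitz graph from $c$-monotonicity. For $z_1,z_2\in S$ nearby, $\delta(z_1;z_2)\ge 0$ combined with \eqref{Taylor} yields $h_{z_2}(\Delta z,\Delta z) \ge -o(|\Delta z|^2)$, with the error depending only on the modulus of continuity of $D^2c$. Substituting the standard form, shrinking $N_0$ so the error is bounded by $\epsilon|\Delta z|^2$ for any prescribed small $\epsilon$, and using $\sigma_i\ge\sigma_{\min}>0$ for $i\le k$, this rearranges to a bound $|\Delta w|^2 \le C_1|\Delta u|^2 + C_2 |\Delta_{\mathrm{null}}|^2$. Hence the projection $\pi:(u,w,\mathrm{null})\mapsto(u,\mathrm{null})$ is injective on $S$ with Lipschitz inverse. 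Kirszbraun (or coordinate-wise McShane) extension of the map $w(\pi(z))$ from $\pi(S)$ to all of $\pi(N_0)$ produces the desired Lipschitz submanifold $L$ of dimension $n_+ + n_- - k$ containing $S$.

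The main obstacle is the quantitative one of ensuring $L$ is spacelike, not merely of the correct dimension. Almost every tangent vector $V$ to $L$ must satisfy $h(V,V)\ge 0$, which by the standard form requires $\sum\sigma_i V_{w_i}^2 \le \sum \sigma_i V_{u_i}^2$; this in turn demands that the Lipschitz constant of $w$ in the $(u,\mathrm{null})$ variables be sufficiently small. Achieving this involves shrinking $N_0$ to drive the Taylor remainder to zero, together with a judicious rescaling of the coordinates to push the effective Lipschitz constant below the spacelike threshold. A secondary subtlety is that the null directions receive no leading-order constraint from $h$, so their contribution to the Lipschitz regularity of $w$ must be harvested entirely from the higher-order Taylor remainder.
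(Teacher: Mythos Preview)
Your approach is essentially the paper's: a linear change of coordinates (yours via the SVD of $D^2_{xy}c$, the paper's via $\tilde y = \Lambda(y-y_0)$) reduces the mixed Hessian to standard form, after which the Minty rotation converts $c$-monotonicity into a Lipschitz-graph bound --- and the paper in fact defers the sub-maximal-rank case entirely to \cite{Pass12}, whereas you sketch it directly and correctly isolate the two real obstacles (spacelikeness of the \emph{extended} $L$, and the fact that control of $w$ in the null directions comes only from the Taylor remainder). One small slip: with your block form $h=\tfrac12\bigl(\begin{smallmatrix}0&-A\\-A^T&0\end{smallmatrix}\bigr)$ and $A=\mathrm{diag}(\sigma_i)$ positive, the vector $(\partial_{x_i}+\partial_{y_i})/\sqrt{2}$ has $h(V,V)=-\sigma_i/2<0$ and is therefore \emph{timelike}, so your spacelike/timelike labels are reversed.
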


\begin{proof}[Idea of proof]
A kernel of the proof can be apprehended already in the one-dimensional case
$n^\pm=1$.  When $c$ has rank zero, taking $L=N_0$ implies the result,  so assume
$c$ has full rank ($2k=2$), meaning
either $\partial^2 c/\partial x \partial y <0$ or $\partial^2 c/\partial x \partial y > 0$
near $(x_0,y_0)$.  In the first case,  $c$-monotonicity of $S$ implies
$S \cap R$  is contained in a non-decreasing subset
of any sufficiently small two-dimensional
rectangle $R=B_\epsilon(x_0)\times B_\epsilon(y_0)$.
This monotonicity is well-known in both mathematical \cite{Lorentz53}
and economic contexts \cite{Spence73} \cite{Mirrlees71}.
Rotating coordinates by setting $u=(x+y)/\sqrt{2}$ and $v=(y-x)/\sqrt{2}$,
the monotonicity is equivalent to asserting that $S$ is contained in
the graph of $\{(u,V(u))\}$ of a function $v=V(u)$ with Lipschitz constant one.
In the second case,  $c$-monotonicity would imply $S \cap R$ is non-increasing,
hence contained in a $1$-Lipschitz graph of $u$ over $v$.

The same argument carries over immediately to the bilinear cost $c(x,y) = - x \cdot y$
in higher dimensions $n^+=n^-$ \cite{Minty62}. % \cite{AlbertiAmbrosio99}
For other costs with rank $2k=2n^+=2n^-$,  one can make a similar argument after
a linear change of coordinates $\tilde x = x-x_0$ and $\tilde y =\Lambda(y-y_0)$ chosen
so that in the new coordinates the cost takes the form
$\tilde c(\tilde x,\tilde y) = -  \tilde x\cdot \tilde y + o(\Delta \tilde x^2 + \Delta \tilde y^2)
+f(\tilde x) + f(\tilde y)$ \cite{McCannPassWarren12}.
The cases $k< \min\{n^+,n^-\}$ and $n^+ \ne n^-$ are worked out in \cite{Pass12}.
\end{proof}

When the rank of $c$ is maximal (i.e.\ $k = \min\{n^+,n^-\}$),  then the dimensional bound
is $\dim L \le \max\{n^+,n^-\}$.  Taking $n^+ \le n^-$ without loss of generality,
if the measure $\mu^-$ fills $M^-$ (say,  by being mutually absolutely continuous with
respect to Lebesgue measure in any coordinate patch),  the dimension of the Lipschitz
submanifold $L$ on which $\gamma$ concentrates cannot be less than $n^-$,  in which case
we see the bound given by the theorem is sharp: $\dim L = n^-$.

\begin{example}[Submodular costs on the line] \label{E:line}
If $M^\pm = \R$   there is a unique measure in $\Gamma(\mu^+,\mu^-)$ whose support
$S = \spt \gamma$
forms a non-decreasing subset of the plane.  This measure is the unique minimizer of
Kantorovich's problem \eqref{K min = M inf} for each cost $c \in C^1(\R^2)$ satisfying
$\partial^2 c /\partial x \partial y <0$; see e.g.\ \cite{McCann99}.  Apart from at most
countably many vertical segments,  the set $S$ is contained in the graph of some
$G:\R \longrightarrow \R \cup \{\pm \infty\}$ non-decreasing.  Unless $\mu^+$ has atoms,
the vertical segments
in $S$ are $\gamma$ negligible, in which case $\gamma = (id \times G)_\# \mu^+$ and Monge's
infimum is attained uniquely by~$G$.
\end{example}

\begin{example}[Transporting mass between spheres]\label{E:spheres}
Transporting mass on the surface of the earth has lead to consideration of the
cost function $c(x,y) = \frac{1}{2}|x - y|^2$ restricted to the boundary of the unit
sphere $x,y \in \partial B^{n+1}_1(\0) \subset \R^{n+1}$ so that $0 \le c \le 2$
\cite{ChiapporiMcCannNesheim10}\cite{AhmadKimMcCann11}\cite{FigalliKimMcCann-econ}\cite{McCannSosio11},
a problem considered earlier in the context of shape recognition \cite{GangboMcCann00}\cite{Ahmad04}.
The restricted cost has rank $2n$ except on the degenerate set
$c=1$,  where it has rank $2n-2$.  Thus any $c$-cyclically monotone subset $S$ of
the $2n$-dimensional product space $\partial B^{n+1}_1(\0) \times \partial B^{n+1}_1(\0)$
has dimension at most $n$ except along the degenerate set, where it %can be expected to have
has dimension at most $n+1$ (in spite of the fact that the degenerate set is $2n-1$ dimensional).
Since the degenerate set separates the orientation preserving and orientation reversing
parts $S^+$ and $S^-$ of $S$,  this means that
$S^+$ cannot intersect $S^-$ transversally (except in dimension $n=1$);
instead, if $S^+$ meets $S^-$
at a point where both have $n$-dimensional tangent spaces, these spaces must
have $n-1$ directions in common.  For example,
if $n=3$, and both $S^+$ and $S^-$ are generically $3$-dimensional,  but their union is
contained in a $4$-dimensional Lipschitz submanifold, whereas the cost degenerates
on a smooth $5$-dimensional hypersurface.
\end{example}

In summary,  $c$-monotonicity implies rectifiability of
$S=\spt\gamma \subset N=M^+ \times M^-$ in a dimension
determined locally by the rank of the Hessian $h$ of the cross-difference
$\delta^0 \in C^2$;
moreover $S$ must be spacelike with respect this Hessian \eqref{h=Hessian}.
If $h$ is non-degenerate,  we will eventually see that $h$ can be viewed as a
pseudo-metric on $N$ whose Riemannian sectional curvatures combine with $\mu^\pm$
to determine smoothness of $S$.

\section{When do optimal maps exist?}

We now turn to the more classical question of attainment of the infimum \eqref{K min = M inf}.
To expect existence of Monge maps, we generally need $\mu^+$ to be more than atom-free.
We need $\mu^+$ not to concentrate positive mass on any lower dimensional submanifold
of $\X$, or more precisely on any hypersurface parameterized locally in coordinates
as the graph of a difference of convex functions.
This condition, proposed by Gangbo and McCann \cite{GangboMcCann96},
is sharp in a sense made precise by Gigli \cite{Gigli11},
and implies Lipschitz continuity and
$C^2$-rectifiability of the hypersurfaces in question.  Absolute continuity of $\mu^+$
in coordinates --- i.e.\ the existence of a density $f^+$ such that $d\mu^+(x)=f^+(x)dx$ ---
is more than enough to guarantee this.  However,  we also require further structure
of the transportation cost.

For $c \in C^1(N)$,
the Gangbo \cite{Gangbo95} and Levin \cite{Levin99} criterion for
existence of Monge solutions $G:\X \longrightarrow \Y$ given in Appendix \ref{A:MTW}
is equivalent to: \\
{\em
\Aone$_+$ For each $x_0 \in M^+$ and $y_0 \ne y_1 \in M^-$, assume
$x \in M^+ \longmapsto \delta^0(x,y_1)$ has no critical points,
where $\delta^0(x,y_1)=\delta(x,y_1;x_0,y_0)$ is from \eqref{cross-difference}.} \\
Naturally,  this implies $n^+ \ge n^-$,  due to the fact we cannot generally hope to
use a (rectifiable) map $G$ on a low dimensional space to spread a measure over a higher
dimensional space.
In fact, \Aone$_+$ implies something stronger:  namely that every solution of the Kantorovich problem
is a Monge solution.  This in turn implies uniqueness of the Kantorovich (and hence
Monge) solution,  for the following reason.  Suppose two Kantorovich solutions exist,
and both correspond to Monge solutions: $\gamma_0 = (id \times G_0)_\# \mu^+$ and
$\gamma_1 = (id \times G_1)_\# \mu^+$.  Linearity of the Kantorovich problem implies
$\gamma_2 :=(\gamma_0 + \gamma_1)/2$ is again a solution,  hence by \Aone$_+$ must
concentrate on the graph of a map $G:\X \longrightarrow \Y$.  It is then easy to argue
$\gamma_i=(id \times G)_\#\mu^+$ for $i=0,1,2$ as in e.g.\ \cite{AhmadKimMcCann11}.
This implies $\gamma_0=\gamma_1$; moreover $G_0=G=G_1$ $\mu$-a.e.  Thus we arrive at
the following theorem \cite{Gangbo95} \cite{Levin99} \cite{AhmadKimMcCann11} \cite{Gigli11}:

\begin{theorem}[Existence and uniqueness of optimal maps]\label{T:LevinGigli}
Let $\mu^\pm$ be probability measures on manifolds $M^\pm$,  with a cost
$c \in C^1(M^+ \times M^-)$ which is bounded and satisfies \Aone$_+$.
If $\mu^+$ assigns zero mass to each Lipschitz hypersurface in $M^+$,  then Kantorovich's minimum
is uniquely attained,  and the minimizer $\gamma = (id \times G)_\# \mu^+$ vanishes
outside the graph of a map $G$ solving Monge's problem.  In fact, not all Lipschitz
hypersurfaces are required: it is enough that
$\mu^+$ vanish on each hypersurface locally parameterizable in coordinates as the graph
of a difference of two convex functions.
\end{theorem}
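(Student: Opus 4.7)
I would deduce both existence and uniqueness from Kantorovich duality together with the twist hypothesis \Aonep, handling existence first. For any Kantorovich minimizer $\gamma\in\Gamma(\mu^+,\mu^-)$, duality for bounded continuous costs produces bounded Borel potentials $u:M^+\to\mathbf{R}$ and $v:M^-\to\mathbf{R}$, taken as $c$-transforms of one another via $u(x)=\inf_{y\in M^-}[c(x,y)-v(y)]$, which satisfy $u(x)+v(y)\le c(x,y)$ on $M^+\times M^-$ with equality $\gamma$-a.e. In particular, for every $(x_0,y_0)\in\spt\gamma$, the function $x\mapsto c(x,y_0)-u(x)$ attains its minimum at $x=x_0$.

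The crux is to show $u$ is differentiable $\mu^+$-a.e. Since $c\in C^1$ is bounded with locally equi-bounded $x$-derivatives on each compact chart of $M^+$, the family $\{c(\cdot,y)-v(y)\}_{y\in M^-}$ is locally equi-Lipschitz, whence $u$ is itself locally Lipschitz in coordinates. Structural results for c-concave potentials then place its non-differentiability set inside a countable union of Lipschitz hypersurfaces in $M^+$ (and, when $u$ can be realized locally as a difference of convex functions in coordinates, inside the finer class of hypersurfaces locally parameterized as graphs of differences of convex functions); the hypothesis on $\mu^+$ renders this set $\mu^+$-null. At any differentiability point $x_0$ of $u$, if $(x_0,y_0)\in\spt\gamma$, the inequality $u(x)-u(x_0)\le c(x,y_0)-c(x_0,y_0)$ together with equality at $x_0$ forces $\nabla u(x_0)=\nabla_x c(x_0,y_0)$. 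By \Aonep, the map $y\mapsto\nabla_x c(x_0,y)$ is injective, so $y_0$ is uniquely determined by $x_0$; this defines a Borel map $G:M^+\to M^-$ with $\gamma=(id\times G)_\#\mu^+$, which solves Monge's problem via \eqref{K min = M inf}.

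For uniqueness, suppose $\gamma_0\ne\gamma_1$ were two Kantorovich minimizers; linearity of \eqref{Kantorvich cost} makes $\gamma_2:=(\gamma_0+\gamma_1)/2$ a minimizer too, and applying the preceding paragraph to each $i\in\{0,1,2\}$ gives Borel maps $G_i$ with $\gamma_i=(id\times G_i)_\#\mu^+$. Comparing the $\mu^+$-disintegrations of the identity $2\gamma_2=\gamma_0+\gamma_1$ along $\pi^+$ forces $G_0=G_1=G_2$ $\mu^+$-a.e., hence $\gamma_0=\gamma_1$, a contradiction. The principal obstacle lies in the middle paragraph: for $c$ merely $C^1$, capturing the non-differentiability set of the c-concave potential $u$ within the exact class of hypersurfaces excluded by the hypothesis on $\mu^+$. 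The sharpening from the Lipschitz to the DC version hinges on exhibiting $u$ locally as a difference of convex functions in coordinates --- immediate when $c$ inherits enough semiconcavity (e.g.\ $c\in C^2$), but subtler otherwise.
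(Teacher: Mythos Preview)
Your proposal is correct and follows essentially the same route as the paper. The paper does not give a self-contained proof of this theorem; it sketches the uniqueness argument in the paragraph immediately preceding the statement (the averaging $\gamma_2=(\gamma_0+\gamma_1)/2$ trick, which you reproduce verbatim) and defers the existence portion to the cited works of Gangbo, Levin, Ahmad--Kim--McCann, and Gigli --- and what you have written is precisely the standard Gangbo--Levin argument (duality, local Lipschitz/semiconcavity of the $c$-concave potential $u$, $\mu^+$-a.e.\ differentiability via the structure of its singular set, then the first-order condition $\nabla u(x_0)=\nabla_x c(x_0,y_0)$ combined with the twist \Aonep\ to read off $y_0=G(x_0)$). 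You also correctly identify, and honestly flag, the one place where genuine work is hidden: passing from the coarse Lipschitz-hypersurface statement to the sharper DC-hypersurface version requires writing $u$ locally as a difference of convex functions, which is automatic for $c\in C^2$ (semiconcavity) but is exactly the refinement the paper attributes to Gigli for $c$ merely $C^1$.
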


Notice  \Aone$_+$ asserts the restriction of $\delta^0$ to each horizontal fibre
$\X \times \{y_1\}$ has no critical points,
except on the fibre $y_1=y_0$  where
$\delta^0$ vanishes identically.  To guarantee invertibility of the map $G$,
we need the same condition to hold for the reflected cost $c^*(y,x) := c(x,y)$,  meaning the
roles of $\X$ and $\Y$ are interchanged.  If both $c$ and $c^*$ satisfy \Aone$_+$,
we say \Aone\ holds.  Thus \Aone\ is equivalent to asserting that $(x_0,y_0)$ is
the only critical point of $\delta^0(x,y)$.

%  and can therefore be reformulated as:
%\Aone For each $(x_0,y_0) \in N = M^+ \times M^-$, assume
%$(x,y) \in N \longmapsto \delta^0(x,y)$ has only one critical point,
%where $\delta^0(x,y)=\delta(x,y;x_0,y_0)$ is given by \eqref{cross-difference}.

Many interesting costs,  such as $c(x,y)=h(x-y)$ with $h$ strictly convex or concave
on $M^\pm =\R^n$ satisfy these hypotheses.  The most classical of these is the
Euclidean distance squared \cite{Brenier87} \cite{CullenPurser84}
\cite{SmithKnott87} \cite{RuschendorfRachev90}.
Regularity of the convex gradient map it induces, generalizing Example \ref{E:line}, was
established by Delano\"e \cite{Delanoe91} for $n=2$
and Caffarelli \cite{Caffarelli91} \cite{Caffarelli96b}
and Urbas \cite{Urbas97} for $n \ge 3$.

\begin{example}[Euclidean distance squared]\label{E:Brenier's cost}
If $M^\pm \subset \R^n$ and $\mu^+$ vanishes on all hypersurfaces,  there
is a unique measure in $\Gamma(\mu^+,\mu^-)$ concentrated on the graph of the
gradient of a convex function $u:\R^n \longrightarrow \R \cup \{+\infty\}$.
This measure is the unique minimizer of
Kantorovich's problem \eqref{K min = M inf} for the cost $c(x,y) = \frac{1}{2}|x-y|^2$
\cite{Brenier91} \cite{McCann95}.  If $d\mu^\pm = f^\pm d\Hn$ are both absolutely continuous
with respect to Lesbesgue,  the Monge-Amp\`ere equation $f^+(x) = f^-(Du(x))\det D^2 u(x)$
holds $\mu^+$-a.e.\ \cite{McCann97}.  If moreover, $\log f^\pm \in L^\infty(M^\pm)$
with $M^-$ convex and $\Hn(\partial M^+)=0$,  then $u \in C^{1,\alpha}_{loc}(M^+)$ for
some $\alpha>0$ \cite{Caffarelli91} estimated in \cite{ForzaniMaldonado04}.
If, in addition $f^\pm \in C^{1,\bar \beta}$ and
$M^+$ and $M^-$ are both smooth and strongly convex --- meaning the principle curvatures
of their boundaries are all strictly positive --- then $u \in C^{2,\beta}(\bar M^+)$ for all
$0<\beta<\bar \beta<1$ \cite{Delanoe91} \cite{Caffarelli96b} \cite{Urbas97}.
Higher regularity follows from smoothness of $f^\pm$.
% by differentiating the Monge-Amp\`ere equation and applying standard elliptic theory.
\end{example}

On the other hand,  \Aone$_+$ also fails for many
interesting geometries.  We mention two such examples.  In the first ---
the cost function of interest to Monge \cite{Monge81} --- optimal maps turn out
to exist but are not unique.  Their non-uniqueness was quantified with
Feldman \cite{FeldmanMcCann02u}.
In the second,  Monge's infimum turns out not to be attained,
despite the fact that the Kantorovich minimizer is unique.

\begin{example}[Uniqueness fails for Monge's cost]
Let open sets $M^\pm \subset \R^n$ have finite volume and $c(x,y) = |x-y|$.
Monge was originally interested in transporting the uniform measure
$\mu^\pm = \frac{1}{\Hn(M^\pm)}\Hn$ from one domain to the other, when $n=3$
and $\Hn$ denotes the $n$-dimensional Hausdorff measure, and coincides with
Lebesgue measure in this case \cite{Monge81}.
Taking $M^+$ disjoint from $M^-$ ensures smoothness of $c$.  Notice that when $n=1$ and
$M^+$ and $M^-$ are disjoint intervals,  every $\gamma \in \Gamma(\mu^+,\mu^-)$ has the
same total cost $cost(\gamma)$.  In this case the solution to Kantorovich's problem is badly
non-unique.  Clearly \Aone$_+$ also fails in this case.  In higher dimensions,  the situation
is slightly less degenerate since the cost takes a range of values on $\Gamma(\mu^+,\mu^-)$,
but it remains true that its extrema are not uniquely attained.  In this setting,  it can be
a difficult problem to show that Monge's infimum is attained. This problem was  first solved by
Sudakov in the plane $n=2$; he asserted a result in all dimensions but it was later discovered
that one of his claims sometimes fails if $n>2$.  This existence result was extended to higher
dimensions by Evans and Gangbo, assuming $\mu^\pm$ to be given by
Lipschitz continuous densities on $\R^n$ \cite{EvansGangbo99}, and for general
absolutely continuous densities $\mu^\pm$
by Ambrosio \cite{Ambrosio03},  Trudinger-Wang \cite{TrudingerWang01}
and Caffarelli-Feldman-McCann \cite{CaffarelliFeldmanMcCann00} simultaneously and
independently.
The last group also considered costs given by non-Euclidean norms,
but with smooth and strongly convex unit balls,
restrictions removed in a seqeunce of papers by different teams of authors
including Ambrosio, Bernard, Buffoni, Bianchini, Caravenna, Kirchheim, and Pratelli,
and culminating in work of Champion and DePascale \cite{ChampionDePascale11}.
%
%Non-Euclidean norms were investigated
%first in \cite{CaffarelliFeldmanMcCann00},  and then in increasing generality by
%... \cite{ChampionDePascale09p}.
\end{example}

On the other hand, if $M^+$ is a compact manifold without boundary,
it is evident that $x \in M^+ \longmapsto \delta^0(x,y_1)$ must attain at
least one maximum and one minimum so that --- as long as the cost is assumed
differentiable --- it is clear that \Aone$_+$ cannot be satisfied.
In this case,  it will not always be true that Monge's
infimum \eqref{K min = M inf} is attained, as my examples
with Gangbo \cite{GangboMcCann00} show:

\begin{example}[Transporting mass between spheres, revisited]\label{E:spheres revisited}
Restrict $c(x,y) = \frac{1}{2}|x-y|^2$ to
$M^\pm = \partial B_1(\0) \subset \R^{n+1}$ so that $0\le c \le 2$,
as in Example \ref{E:spheres}.  Take $\mu^\pm$ to be mutually
absolutely continuous with respect to surface area $\Hn$ on their
respective spheres,  but take most of the mass of $\mu^+$ to be concentrated
near the north pole and most of the mass of $\mu^-$ to be concentrated near
the south pole.  Then Monge's infimum \eqref{K min = M inf} will not be attained,
despite the fact that the Kantorovich minimizer $\gamma$ is unique.
The intersection of $S=\spt \gamma$ with the set $c \le 1$ is contained in the
graph of a map $G:M^+ \longrightarrow M^-$,  while the intersection $S \cap \{c \ge 1\}$
is contained in the graph of a map $H:M^- \longrightarrow M^+$ --- sometimes called an
{\em antigraph}.  If the densities $f^\pm = d\mu^\pm / d\Hn$ are both bounded,
so that $\log f^\pm \in L^\infty$,  then $G$ is a homeomorphism of
$\partial B_1$ and $H$ may be taken to be continuous \cite{GangboMcCann00};
both maps enjoy a local H\"older exponent of continuity $\alpha=1/(4n-1)$
except possibly where their graphs touch the set $\{c=1\}$ where the rank of
$c$ drops from $2n$ to $2n-2$ \cite{McCannSosio11}.
It may be possible to improve this H\"older exponent to $\alpha=1/(2n-1)$ using techniques
of Liu \cite{Liu09},  but even when $f^\pm$ are smooth we have no idea how to prove $G$
will be smoother, nor how to extend H\"older continuity of $G$ up to the degenerate set
$\{c=1\}$.
\end{example}

Notice that global differentiability of the cost is crucial to this discussion.
For costs whose differentiability fails --- even on a small set such as the Riemannian
cut locus --- the theorem which follows gives many natural examples where existence and
uniqueness both hold.

\begin{theorem}[Minimizing Riemannian distance squared]\label{T:Riemannian}
Let $c(x,y) = d^2(x,y)/2$ be the square distance induced by some Riemannian
metric on a compact manifold $M^+=M^-$.  If $\mu^+$ is absolutely continuous
(with respect to Riemannian volume) then the Kantorovich minimizer is unique in
\eqref{K min = M inf}, and takes the form $\gamma = (id \times G)_\# \mu^+$
for a map solving Monge's problem \cite{McCann01}.
In case $M^\pm$ are round spheres \cite{Loeper08p}
(or quotients \cite{DelanoeGe10}, submersions \cite{KimMcCann08p} or
products thereof \cite{FigalliKimMcCann-spheres}),  and both $\mu^\pm$ are given
by smooth positive densities with respect to surface area,  then the map $G$ will
be a smooth diffeomorphism.
\end{theorem}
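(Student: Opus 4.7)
The plan is to split the argument along the two assertions of the theorem: existence and uniqueness of a Monge solution for a general compact Riemannian cost $c=d^2/2$, and smoothness of this map on the round sphere (and its relatives). The first half adapts the strategy behind Theorem \ref{T:LevinGigli}; the second half invokes the Ma-Trudinger-Wang regularity theory.

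For existence and uniqueness, the cost $c(x,y)=d^2(x,y)/2$ fails to be globally $C^1$ along the Riemannian cut locus, so Theorem \ref{T:LevinGigli} does not apply verbatim. However $c$ is Lipschitz and semiconcave on the compact manifold, hence differentiable off a closed set of measure zero. First I would invoke Kantorovich duality to produce a $c$-concave potential $\varphi:M^+\to\R$ whose $c$-superdifferential at each $x$ contains every $y$ with $(x,y)\in\spt\gamma$. Semiconcavity of $\varphi$ (inherited from $c$), together with absolute continuity of $\mu^+$, forces $\varphi$ to be differentiable $\mu^+$-a.e.; at each such differentiability point $x$ and each $y$ with $(x,y)\in\spt\gamma$ one must have $\nabla\varphi(x)=\nabla_xc(x,y)$. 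The Riemannian ingredient is that off the cut locus the map $y\mapsto -\nabla_xc(x,y)=\exp_x^{-1}(y)$ is a diffeomorphism, so $y$ is pinned down as $G(x):=\exp_x(-\nabla\varphi(x))$ for $\mu^+$-a.e. $x$. Uniqueness of $\gamma$ then follows by the averaging argument given before Theorem \ref{T:LevinGigli}: any second optimizer $\gamma'$, averaged with $\gamma$, must itself concentrate on the graph of some map, which forces $\gamma=\gamma'$.

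For smoothness on the round sphere (and quotients, Riemannian submersions, and products thereof), I would proceed through the Ma-Trudinger-Wang framework. First, verify that $c=d^2/2$ on the round sphere satisfies the strong MTW condition \Athrees; this is Loeper's identification of the MTW fourth-order tensor with the sectional curvatures of the pseudo-metric $h$ from \eqref{h=Hessian}, which turn out uniformly positive on the sphere. The remaining MTW hypotheses (domain convexity in the $c$-sense) hold automatically on the full sphere; for quotients, submersions, and products one invokes the transfer results of Delano\"e-Ge, Kim-McCann, and Figalli-Kim-McCann, each showing that the relevant curvature condition descends or ascends under the corresponding geometric operation. With \Athrees in hand, the continuity method together with the interior $C^2$ and higher-regularity bootstraps for the generated Monge-Amp\`ere-type equation produce $\varphi\in C^\infty$, whence $G=\exp_x(-\nabla\varphi)$ is a smooth diffeomorphism as soon as $f^\pm$ are smooth and strictly positive.

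The main obstacle is verifying \Athrees on the sphere, where global Riemannian geometry meets the fourth-order cost tensor: the computation along non-trivial geodesic perturbations of the product sphere has no flat analogue and is genuinely delicate, and the stability of MTW under submersions, quotients, and products demands similar tensorial bookkeeping. By comparison, once \Athrees is in place the PDE regularity is a (technically intricate but essentially standard) continuity-method argument, and the existence/uniqueness half is comparatively soft, hinging on semiconcavity of $d^2$ rather than on curvature.
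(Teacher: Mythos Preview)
The paper does not supply its own proof of Theorem~\ref{T:Riemannian}; it is stated as a survey result with citations to \cite{McCann01}, \cite{Loeper08p}, \cite{DelanoeGe10}, \cite{KimMcCann08p}, and \cite{FigalliKimMcCann-spheres}, accompanied only by the remark that the cut locus prevents a direct appeal to Theorem~\ref{T:LevinGigli}. Your sketch is faithful to the arguments in those cited sources and to that remark: the existence/uniqueness half via semiconcavity of $d^2/2$, $c$-concave potentials, and the formula $G(x)=\exp_x(-\nabla\varphi(x))$ is precisely the strategy of \cite{McCann01}, and the regularity half via verification of \Athrees\ on the sphere and its propagation under quotients, submersions, and products is exactly what the respective references establish. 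So your proposal is correct and matches the literature the paper invokes; there is simply no in-paper proof to compare it against.
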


%For this theorem, as for the previous one,  it is actually sufficient that $\mu^+$
%vanish on hypersurfaces locally parameterizable in coordinate charts as graphs of
%difference of functions which are convex in the chosen coordinates.
Notice that the existence and uniqueness asserted in Theorem \ref{T:Riemannian}
is not quite a corollary
of Theorem \ref{T:LevinGigli},  since compactness
of the manifold $M^\pm$ forces the cut-locus to be non-trivial. Here the cut-locus
is defined as (the closure of) the set of points where differentiability
of the cost $c=d^2/2$ fails.

\section{When are optimal measures unique?}

The preceding section shows that if the cross-difference $\delta^0(x,y)=\delta(x,y;x_0,y_0)$
has no critical points unless $x=x_0$ or $y=y_0$,  then Monge's problem is
soluble and the Kantorovich problem admits a unique solution.  Although very
useful when it applies,  this criterion is not satisfied in all cases of interest.
--- for example,  when trying to minimize the restriction of the quadratic cost
$c(x,y) = |x-y|^2/2$ to the Euclidean unit sphere $M^\pm = \partial B_1(\0)\subset \R^{n+1}$.
In such situations,  my results with Chiappori, Nesheim \cite{ChiapporiMcCannNesheim10},
Ahmad and Kim \cite{AhmadKimMcCann11} may be useful:

\begin{theorem}[Uniqueness of minimizer for subtwisted costs]
\label{T:subtwist}
Fix Borel probability measures $\mu^\pm$ on manifolds $M^\pm$,
with $\mu^+$ vanishing on each hypersurface in $M^+$, and a
bounded cost $c \in C^1(M^+ \times M^-)$.  Suppose for each
$x_0 \in M^+$ and $y_0 \ne y_1 \in M^-$,  the cross-difference
$\delta^0(x,y) := \delta(x,y; x_0,y_0)$ from \eqref{cross-difference}
satisfies
\begin{equation}\label{subtwist}
x \in M^+ \longmapsto \delta^0(x,y_1)
\begin{array}{l}
\mbox{\rm has at most two critical points, namely, a unique} \cr
\mbox{\rm global minimum and a unique global maximum.}
\end{array}
\end{equation}
Then the Kantorovich problem has a unique solution,  and it takes the form
$\gamma = (id \times G)_\#\mu + (H \times id)_\# (\mu^--G_\#\mu)$ for some
maps $G:M^+ \longrightarrow M^-$ and $H:M^- \longrightarrow M^+$ and
non-negative measure $\mu \le \mu^+$ such that $\mu^--G_\#\mu$ vanishes
on the range of $G$.
\end{theorem}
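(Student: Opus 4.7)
\emph{Stage 1 (Existence and $c$-monotonicity).} Weak-$*$ compactness of $\Gamma(\mu^+,\mu^-)$ together with continuity of the bounded cost $c$ yields a Kantorovich minimizer $\gamma$, and the standard exchange argument recalled after \eqref{cross-difference} shows $S := \spt\gamma$ is $c$-monotone.

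\emph{Stage 2 (Graph-plus-antigraph decomposition --- the hard step).} The goal is to establish $S \subset \graph G \cup \graph^{-1} H$ modulo a $\gamma$-null set, for Borel maps $G : \dom G \subset M^+ \to M^-$ and $H : \dom H \subset M^- \to M^+$, where $\graph^{-1}H := \{(H(y),y) : y \in \dom H\}$. I would invoke Kantorovich duality to obtain optimal $c$-concave potentials $u$ on $M^+$ and $v=u^c$ on $M^-$ with $u+v\le c$ pointwise and $u+v=c$ on $S$. Fix any reference $(x_0,y_0)\in S$ and set $\delta^0 := \delta(\,\cdot\,,\,\cdot\,;x_0,y_0)$. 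For any $(x,y_1)\in S$ with $y_1\ne y_0$, $c$-monotonicity gives $\delta^0(x,y_1)\ge 0 = \delta^0(x_0,y_1)$. Combined with the numbered-limb framework of \cite{AhmadKimMcCann11} (which generalizes the Hestir--Williams graph+antigraph decomposition), the subtwist bound \eqref{subtwist} --- at most a unique global minimum and a unique global maximum of $\delta^0(\cdot,y_1)$ --- caps the number of limbs in the $c$-monotone decomposition of $S$ at two, yielding the desired containment. The technical subtlety is the non-differentiability of $u$: being locally semiconcave as a $c$-concave potential, $u$ is differentiable outside a countable union of difference-of-convex hypersurfaces in $M^+$, which the hypothesis that $\mu^+$ charges no such hypersurface renders $\mu^+$-null and hence $\gamma$-null after projection through $\pi^+$. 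This propagation of the smooth two-critical-point bound through the non-smooth duality is the technical heart of \cite{ChiapporiMcCannNesheim10} and \cite{AhmadKimMcCann11}, and is where I expect the main difficulty to lie.

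\emph{Stage 3 (Decomposition identity and uniqueness).} Define $\mu := \mu^+|_{\dom G}$ (Borel by the measurable selection in Stage 2). Then $(id\times G)_\#\mu$ accounts for the graph portion of $\gamma$; the remainder is supported on $\graph^{-1}H$ with $y$-marginal $\mu^- - G_\# \mu$, yielding the decomposition $\gamma = (id\times G)_\#\mu + (H\times id)_\#(\mu^--G_\#\mu)$ with $\mu^--G_\#\mu$ vanishing on $\ran G$ by construction. For uniqueness, if $\gamma_0,\gamma_1$ are two Kantorovich minimizers then $\tfrac12(\gamma_0+\gamma_1)$ is also one, whose support contains both $\spt\gamma_i$; Stage 2 applied to this midpoint produces common maps $G,H$ and hence a common graph-part measure $\mu$, which forces $\gamma_0=\gamma_1$ by the convexity argument preceding Theorem \ref{T:LevinGigli}.
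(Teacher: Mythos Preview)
The paper supplies no proof of Theorem~\ref{T:subtwist}; it states the result and attributes it to \cite{ChiapporiMcCannNesheim10} and \cite{AhmadKimMcCann11}. Your three-stage outline --- existence plus $c$-monotonicity, then the graph-plus-antigraph decomposition via duality and the numbered-limb framework of \cite{AhmadKimMcCann11}, then uniqueness by the midpoint trick --- is exactly the strategy of those references, and you correctly identify Stage~2 as carrying the weight.

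Two places where your sketch would need tightening before it stands on its own. First, you invoke local semiconcavity of the $c$-concave potential $u$ to confine its non-differentiability set to a countable union of difference-of-convex hypersurfaces; semiconcavity of $u$ normally requires $c \in C^{1,1}$ (a uniform modulus on $D_x c$), whereas the hypothesis here is only $c \in C^1$. Under bare $C^1$ the potential is merely Lipschitz, and Rademacher gives differentiability only $\Hn$-a.e., which does not by itself match the hypersurface hypothesis on $\mu^+$. You should either strengthen the regularity assumption on $c$ in your sketch or indicate how the cited works bridge this gap. Second, your Stage~3 uniqueness argument is too compressed: in the pure-graph (twisted) case the midpoint trick immediately forces $\gamma_0=\gamma_1$, but in the graph-plus-antigraph setting one must also argue that the \emph{split} --- the measure $\mu$ apportioning mass between the graph and the antigraph --- is uniquely determined by the marginal constraints together with the condition that $\mu^- - G_\#\mu$ vanish on $\ran G$. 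This requires more than the convexity argument preceding Theorem~\ref{T:LevinGigli}, and is where \cite{AhmadKimMcCann11} does additional work.
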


In other words,  the unique Kantorovich solution concentrates on the union of
the graph and an antigraph, %namely the graph of the map
of $G:M^+ \longrightarrow M^-$ and of %the antigraph of the map
$H:M^- \longrightarrow M^+$ respectively.  Notice that if the manifold $M^+$ is compact,
hypothesis \eqref{subtwist} restricts its Morse structure to be that of the sphere,
so the theorem generalizes of Example \ref{E:spheres revisited}:
However apart from the continuity results of \cite{GangboMcCann00} \cite{McCannSosio11}
and \cite{KitagawaWarren11p},
it is not known when $G$ and $H$ can be expected to be smooth.  It is even more
shocking that no criterion analogous to Theorem \ref{T:subtwist} is known which guarantees
uniqueness of Kantorovich minimizer on the torus --- or indeed on any other compact manifolds
$M^\pm$ apart from the sphere.

%\begin{problem}[Torus]
%\end{problem}

\section{When are optimal maps continuous? Smooth?}

Examples \ref{E:Brenier's cost}, \ref{E:spheres revisited} and Theorem \ref{T:Riemannian}
complement Theorems \ref{T:rectifiability} and \ref{T:LevinGigli} by providing a variety
of settings where the optimal map $G$ is continuous and/or
support of the optimal measure can actually be shown to be smooth.
In each case,  we need the cost to be suitable,  the domain geometry
to be favorable,  and the measures to be positive, bounded and possibly smooth.

Following the analysis of a number of such examples, including the restriction of
$c(x,y) = -\log|x-y|$ to the unit sphere \cite{Wang96} \cite{Wang04},
a general theory for addressing such questions has begun to be developed, starting from
the pioneering work of Ma, Trudinger and Wang \cite{MaTrudingerWang05}, who identified
conditions %\Azero$'$--\Afour$'$
on the transportation cost $c$ which are close to being necessary and sufficient for
smoothness of $G$. %  see Appendix \ref{A:MTW}.
Their work is set on bounded domains $M^\pm \subset \R^{n}$, and
as we now explain,  each of their conditions %\Azero$'$--\Afour$'$
can be reformulated in terms of the topology
and geometry of the cross-difference
$\delta^0(x,y) = \delta(x,y; x_0,y_0)$ from \eqref{cross-difference}
and its Hessian $h = \frac{1}{2}{\rm Hess}_{(x_0,y_0)} \delta^0$. % from \eqref{h=Hessian}.

Where $c$ has full rank $2n$,  the Hessian $h$ is non-degenerate and can be understood
as a pseudo-Riemannian metric tensor on the product space.  According to
Claim \ref{C:signature and rank},  this pseudo-metric tensor is not positive
definite,  but instead has the same number of spacelike and timelike dimensions.
At each point point $(x_0,y_0) \in N$,  the light-cone separating these spacelike
from timelike directions consists of the tangent spaces to
$\{x_0\} \times M^-$ and $M^+ \times \{y_0\}$.  However,  just as in Riemannian
(and Lorentzian) geometry,  the pseudo-metric tensor $h$ induces a geometry
on the product space $N = M^+ \times M^-$,  including geodesics and a pseudo-Riemannian
curvature tensor $R_{i'j'k'l'}$, which assigns sectional curvature
$$
\sec^{(\bar N,h)}_{(x_0,y_0)} P\wedge Q = \sum_{1 \le i',j',k',l' \le 2n} R_{i'j'k'l'} P^{i'}Q^{j'}P^{k'}Q^{l'}
$$
to each pair of vectors $P,Q \in T_{{x_0,y_0}}N$.  The explicit formulae expressing geodesics
and the curvature tensor \eqref{Riemann tensor} in terms of $h$ can be found
in \cite{KimMcCann10} or deduced from Appendix \ref{A:MTW};  they are
precisely analogous to the Riemannian case.

In terms these notions,  we may now state conditions equivalent to those of
Ma, Trudinger and Wang \Aone$'$--\Afour$'$ found in Appendix \ref{A:MTW} below:
\smallskip

\Azero\  $c \in C^4(\bar N)$, and for each
$(x_0,y_0) \in \bar N = \bar M^+ \times \bar M^- \subset \R^{n} \times \R^n$:
\smallskip

\Aone\  $(x,y) \in \bar N \mapsto \delta^0(x,y)$ % = \delta(x,y; x_0,y_0)$
from \eqref{cross-difference}
has no critical points save
%        has a single critical point, namely $(x,y) =
$(x_0,y_0)$;
\smallskip

\Atwo\ $c$ has rank $2n$, so $h=$Hess$_{(x_0,y_0)} \delta^0$
defines a pseudo-metric tensor;
% \eqref{h=Hessian};
% on~$\bar N$;
\smallskip

\Athree\ $\sec^{(\bar N,h)}_{(x_0,y_0)} (p \oplus \0)\wedge (\0 \oplus q) \ge 0$ for each lightlike
$(p,q) \in T_{(x_0,y_0)}\bar N$; % is lightlike.
\smallskip

\Afour\ the sets $\{x_0\} \times M^-$ and $M^+ \times \{y_0\}$ are $h$-geodesically convex.
\smallskip \\
Here a subset $Z \subset \bar N$ is said to be {\em $h$-geodesically convex}
if each pair of points
$(x_0,y_0)$ and $(x_1,y_1) \in Z$ can be joined by an geodesic in $\bar N$ lying entirely within $Z$,
geodesics being defined relative to the pseudo-metric $h$.

%\marginpar{References from here on}

The most intriguing of these conditions is the curvature condition \Athree.
A large body of example costs which satisfy
\cite{MaTrudingerWang05} \cite{Loeper08p}
\cite{DelanoeGe10}
\cite{KimMcCann10}
\cite{FigalliRifford09}
\cite{FigalliRiffordVillani12}
\cite{LeeMcCann11} \cite{Lee10} \cite{LeeLi09p}
\cite{DelanoeRouviere12p}
or violate it \cite{MaTrudingerWang05} \cite{Loeper09} have
now been established.  Among the former we may mention the restriction of the Euclidean
distance squared to the graphs $M^\pm \subset \R^{n+1}$ of any pair of
$1$-Lipschitz convex functions \cite{MaTrudingerWang05},
as well as the Riemannian distance squared on
%any locally symmetric space with non-negative sectional curvature \cite{DelanoeRouviere12p}, such
the round sphere \cite{Loeper08p},
and any products \cite{KimMcCann10}, submersions \cite{KimMcCann10} or perturbations
\cite{DelanoeGe10} \cite{FigalliRifford09} \cite{FigalliRiffordVillani12} thereof.
Among the latter we may mention
the Riemannian distance squared on any manifold $(M, g_{ij})$ with a non-negative sectional
curvature somewhere \cite{Loeper09},  and the restriction of the Euclidean
distance squared to the graphs of two functions in $\R^{n+1}$,  one of which is convex
and the other non-convex \cite{MaTrudingerWang05}.
Thus the distance squared in hyperbolic space $c=d_{\mathbf H^n}^2$
violates \Athree,  though $c= -\cosh d_{\mathbf H^n}$ satisfies it \cite{Li09} \cite{LeeLi09p}.

To conclude continuity or higher regularity of $G$ at present requires a slight strengthening
of one of the geometric conditions \Athree\ or \Afour.  If the inequality in \Athree\ holds
strictly whenever the $h$-orthogonal vectors $p \oplus \0$ and $\0 \oplus q$ are
non-vanishing,  we denote that by \Athrees.  If instead the geodesic convexity of the sets
in \Afour\ is strong (i.e.\ $2$-uniform,  in the sense of Example \ref{E:Brenier's cost}
or Appendix \ref{A:MTW}), we denote that by \Afours.  Under these assumptions the following
extensions of Theorem \ref{T:LevinGigli} and Example \ref{E:Brenier's cost}
have been proved,  in works of Ma, Trudinger, Wang, Loeper, Liu, Figalli, Kim and myself.

\begin{theorem}[Continuity and smoothness of optimal maps]
Assume \Azero-\Afour\ hold,  and $d\mu^\pm = f^\pm d\Hn$ are given by densities
satisfying $\log f^\pm \in L^\infty (U^\pm)$ with $U^-=M^-$ and $U^+ \subset M^+$ open.
(a) If \Athrees\ holds,
the map $G \in C^\alpha_{loc}(U^+,M^-)$ is H\"older continuous \cite{Loeper09},
with an exponent $\alpha = 1/(2n-1)$ known to be sharp \cite{Liu09}.
(b) If \Athrees\ fails but \Afours\ holds,  the same conclusion persists but with an unknown
exponent $\alpha$ independent of $c$, but
presumed to depend on $\|\log (f^+/f^-) \|_{L^\infty(U^+ \times M^-)}$.
Either way, higher interior regularity of $G$ follows from smoothness of
$f^\pm$ \cite{LiuTrudingerWang10}.
If, $U^\pm = M^\pm$ and $f^\pm$ are smooth in case (b),  the smoothness of $G$ shown
in \cite{MaTrudingerWang05}
extends up to the boundary~\cite{TrudingerWang09b}.
\end{theorem}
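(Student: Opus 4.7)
The plan is to work with a $c$-convex potential $u:M^+ \to \R$ whose $c$-subdifferential coincides $\mu^+$-a.e.\ with $G$; by the duality theory this $u$ satisfies a Monge--Amp\`ere type equation of the form
\[
\det\bigl[D^2 u(x) - D^2_{xx} c(x,G(x))\bigr] = \frac{f^+(x)}{f^-(G(x))}\,\bigl|\det D^2_{xy}c(x,G(x))\bigr|
\]
in the weak sense appropriate to the MTW framework. Continuity of $G$ then translates into strict $c$-convexity of $u$, and higher regularity of $G$ will follow from classical elliptic bootstrapping once the equation has been rendered locally uniformly elliptic.

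First I would handle part (a) via Loeper's maximum principle. The key lemma is that under \Athrees, for any $c$-convex $u$ and any $x_0 \in M^+$, the $c$-subdifferential $\partial^c u(x_0) \subset M^-$ is itself $c$-convex: it contains every $h$-geodesic (in the sense of \Afour) joining two of its points. The proof is an infinitesimal computation in which the second variation, along such a geodesic $t\mapsto y_t$, of $\delta(x,y_t;x_0,y_0) - u(x)$ splits into a tangential piece and a lightlike sectional curvature piece, the latter having the correct sign exactly because of \Athrees. Feeding this connectedness property into an Alexandrov-style volume comparison on $c$-sections of $u$, and using $\log f^\pm \in L^\infty$ to sandwich the pushed-forward measure between two constant multiples of Lebesgue measure, I would extract the sharp H\"older exponent $\alpha = 1/(2n-1)$ just as Loeper did; Liu's explicit scaling examples then certify sharpness.

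For part (b), when \Athrees\ degenerates but \Afours\ holds, the Loeper maximum principle must be replaced by a Figalli--Kim--McCann style engulfing argument: strong $h$-geodesic convexity of the fibres $\{x_0\}\times M^-$ and $M^+ \times \{y_0\}$ furnishes good barrier functions and a quantitative modulus of continuity for $c$-sections, at the cost of a H\"older exponent whose dependence on $\|\log(f^+/f^-)\|_{L^\infty}$ enters through the engulfing constant. Once continuity is in place in either case, the equation becomes locally uniformly elliptic around the graph of $G$, and I would invoke the Liu--Trudinger--Wang Schauder bootstrap adapted to the MTW structure to upgrade $C^\alpha$ to $C^{2,\alpha}$ and thence to $C^\infty$ when $f^\pm$ are smooth. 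The boundary regularity assertion would follow by constructing boundary barriers from strongly $h$-convex defining functions of $\p M^\pm$, available thanks to \Afours, and running the Trudinger--Wang global second derivative estimates.

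The hard part will be Loeper's maximum principle together with its \Afours-based engulfing substitute: this is where the pseudo-Riemannian geometry of $h$ genuinely enters, and where the delicate hypotheses \Athrees\ and \Afours\ earn their keep. The subsequent bootstrap from $C^\alpha$ to full smoothness, while technically demanding, is essentially a routine application of viscosity-solution and Schauder theory once the nonlinearity has been rendered uniformly elliptic.
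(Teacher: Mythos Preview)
The paper does not prove this theorem at all: it is a survey statement, with each assertion attributed in-line to the original sources (Loeper, Liu, Liu--Trudinger--Wang, Ma--Trudinger--Wang, Trudinger--Wang), and the text moves on immediately afterward to discuss counterexamples when \Athree\ or \Afour\ fail. So there is no ``paper's own proof'' to compare against.

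That said, your sketch is a fair high-level summary of how the cited references actually proceed. A couple of small corrections: in part (a), Loeper's key lemma is not quite that $\partial^c u(x_0)$ is ``$c$-convex'' but that the image $D_x c(x_0,\partial^c u(x_0))\subset T^*_{x_0}M^+$ is convex (equivalently, the $c$-segment between any two elements stays in $\partial^c u(x_0)$), and the quantity whose second variation one computes is $t\mapsto -c(x,y_t)+c(x_0,y_t)$ rather than the full cross-difference minus $u$. For part (b), the engulfing-type argument you describe is indeed the Figalli--Kim--McCann strategy, though the paper leaves that case uncited in the theorem statement itself; note also that the theorem asserts the exponent is \emph{independent of $c$}, which your sketch should reflect. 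The bootstrap and boundary-regularity paragraphs are accurate in spirit.
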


It is possible to construct smooth bounded $f^\pm$ for which
continuity of $G$ fails in the absence of either \Athree\ or \Afour\
as was done by Loeper \cite{Loeper09} and by
Ma, Trudinger and Wang \cite{MaTrudingerWang05} respectively.
Still,  there are few results quantifying the discontinuities of $G$, except for the cost
$c(x,y) = \frac{1}{2}|x-y|^2$ of Example \ref{E:Brenier's cost} \cite{Yu07}
\cite{Figalli10} \cite{FigalliKim10},
for which examples of discontinuous maps go back to Caffarelli \cite{Caffarelli91}.

\section{Closed forms and $c$-cyclical monotonicity}

The sections above have discussed many necessary conditions
for optimality of $\gamma$,  but few sufficient conditions.
In fact,  for bounded continuous $c\in C(M^+ \times M^-)$,
a condition on the support $S=\spt \gamma$ well-known
to be necessary and sufficient for
optimality in $\Gamma(\pi^+_\#\gamma,\pi^-_\#\gamma)$ is given
by: % the following definition:

\begin{definition}[$c$-cyclical monotonicity]
A set $S \subset M^+ \times M^-$ is {\em $c$-cyclically monotone} if and only if
each $k\in \N$, sequence $(x_1,y_1), \ldots, (x_k,y_k) \in S$, and permutation $\tau$
on $k$ letters satisfy the following inequality:
\begin{equation}\label{c-cyclical monotonicity}
\sum_{i=1}^k c(x_i,y_i) \le \sum_{i=1}^k c(x_{\tau(i)},y_i).
\end{equation}
\end{definition}

This result can be found in Pratelli \cite{Pratelli08} or
Schachermayer-Teichmann \cite{SchachermayerTeichmann09},
building on earlier works of Knott-Smith, Gangbo-McCann,
R\"uschendorf, and Ambrosio-Pratelli.
The case $k=2$ corresponds to the $c$-monotonicity condition which implies that $S$
is $h$-spacelike.  The result quoted above shows
the cross-difference $\delta(x,y; x_0,y_0)$ is just the first in an infinite sequence
of functions whose non-negativity on $S^k$ for each $k \in \N$
characterizes optimality of $\gamma$.  In fact,  since all permutations are
made up of cycles,  for each $k$ it is enough to check \eqref{c-cyclical monotonicity}
for the cyclic permutation $\tau(i)=i+1$ if $i<k$ with $\tau(k)=1$.  This family of
conditions has a differential topological content whose relevance we now try to make clear.

Choose any map $G: U^+ \subset M^+ \longrightarrow M^-$ defined on a subset $U^+ \subset M^-$,
whose graph lies inside $S$.  Any differentiable
loop $\sigma:S^1 \longrightarrow M^+$ may be approximated
by $x_i = \sigma(\theta_i)$ for a partition $0<\theta_1 < \cdots < \theta_k \le 2\pi$ as
fine as we please.  The non-negative sums \eqref{c-cyclical monotonicity}
%$$
%0 \le \sum_{i=1}^k D_x c(\sigma(\theta_i),G(\sigma(\theta)))
%\sigma(\theta_{i+1})-\sigma
%$$
then approximate Riemann sums for the integral
$$
0 \le \int_0^{2\pi} D_x c(\sigma(\theta),G(\sigma(\theta))) \cdot \sigma'(\theta) d\theta
$$
arbitrarily closely.  If the form $x \in U^+ \longmapsto D_x c(x,G(x))$ is continuous on an open
set $U^+\subset M^+$ containing $\sigma$, then the Riemann integral exists.
Since the curve can be traversed
in either direction,  the non-negative integral must actually vanish,  hence the form must be closed:
for $U^+$ simply connected, there would exist $u \in C^1_{loc}(U^+)$ such that $D_x c(x,G(x)) = Du(x)$.
Similarly,  if $G$ could be continuously inverted on a simply connected domain $U^-\subset M^-$,
there would exist $v \in C^1_{loc}(U^-)$ such that $D_y c(G^{-1}(y),y) = Dv(y)$.
These suppositions are not so implausible when \Aone--\Atwo\ hold,  since
$S$ at least coincides with the graph of a map $G$ and has a well-defined tangent space
$\Hn$-almost everywhere.

However, despite the fact that neither $G$ nor its inverse will be continuous in general,
some vestige of this integrability persists.  If $c$ is Lipschitz continuous for example,
then \eqref{c-cyclical monotonicity} implies the existence of Lipschitz $u,v$ such that
$c(x,y)-u(x) - v(y) \ge 0$ on $N=M^+ \times M^-$ with equality holding throughout $S$.
This fact,  which goes back to \cite{Rockafellar66} \cite{Rochet87},
is in many senses better than mere integrability of a form:
it requires no topology restriction on the domains,  and not only do we get the first-order
condition $Du(x)=D_x c(x,y)$ for those points $(x,y) \in S$ with $x$ in the set of
$\Hn$ full measure $\dom Du$ where $u$ is differentiable; as a second-order condition
we get positive-definiteness of the matrix
$D^2_{xx} c (x,y)-D^2 u(x) \ge 0$ if $x \in \dom D^2 u$,  and analogous conditions for $v$.
Verily is $S$ contained in the gradient of a convex function when
$c(x,y) = - x \cdot y$ or $c(x,y) = \frac{1}{2}|x-y|^2$ on $U^\pm \subset \R^n$.

As Gangbo and McCann argue \cite{GangboMcCann96},
this rough integrability result of Rockafellar and Rochet
implies the famous duality of Kantorovich \cite{Kantorovich42},
Koopmans and Beckmann \cite{KoopmansBeckmann57}:
\begin{equation}\label{dual}
\min_{\Gamma(\mu^+,\mu^-)} \int_{M^+ \times M^-} c(x,y) d\gamma(x,y) =
\sup_{(u^+,u^-) \in Lip_c} \int_{M^+} u^+ d\mu^+ + \int_{M^-} u^-d\mu^-
\end{equation}
with the supremum over
\begin{equation}\label{Lipc}
Lip_c := \{ u^\pm \in L^1(d\mu^+) \mid c(x,y) \ge u^+(x) + u^-(y) {\rm\ throughout}\ N \}
\end{equation}
being attained at $(u^+,u^-)=(u,v)$.  Indeed, for any
$(u^+,u^-) \in Lip_c$,  integrating the inequality
\eqref{Lipc} against $\gamma \in \Gamma(\mu^+,\mu^-)$ yields
\begin{equation}\label{easy dual}
\int_{M^+ \times M^-} c d\gamma \ge \int_{M^+} u^+ d\mu^+ + \int_{M^-} u^- d\mu^-.
\end{equation}
Thus the $\min$ dominates the $\sup$ in \eqref{dual}.  Starting from
$\gamma \in \Gamma(\mu^+,\mu^-)$ with $c$-cyclically monotone support,
Rochet's generalization of Rockafellar's theorem provides $(u^+,u^-) =(u,v) \in Lip_c$
--- bounded and Lipschitz if $c$ is --- such that equality holds in
\eqref{easy dual}, and hence in \eqref{dual} as desired.

\section{Connections to differential geometry}

We have already seen that the pseudo-Riemannian geometry
induced on the product space $N=\X \times \Y$ by the metric tensor
$h=\frac{1}{2}\Hess \delta^0(x_0,y_0)$
plays a key role in determining whether or not maps $y=G(x)$ which solve Monge's problem
\eqref{Monge cost} are smooth.
Here $h$ is the Hessian of the cross-difference \eqref{cross-difference}--\eqref{h=Hessian}
associated to the cost $c$.  The antisymmetry
$$\delta(x,y;x_0,y_0) = \delta (x_0,y_0; x,y) = - \delta (x,y_0; x_0,y)
$$
ensures that $h$ vanishes on $n \times n$ diagonal blocks.  The involution
$U(\Delta x,\Delta y)=(\Delta x,-\Delta y)$ on $T_{(x_0,y_0)}N$ allows us to
define an antisymmetrized analog of $h$ by
$$\omega(P,Q)= h(P,U(Q)).$$
Here $\omega$ turns out to be a symplectic form if and only if $h$
has the full rank $2n=2n^\pm$ that we often assume.
Notice the similarity to K\"ahler geometry,  with the splitting
$T_{(x_0,y_0)}N = T_{x_0} \X \oplus T_{y_0} \Y$ of the tangent space
associated to $U$ playing the role of the almost complex structure $J$,
and the cost $c$ playing the role of the K\"ahler potential.  For geometric
measure theory in such geometries see Harvey and Lawson \cite{HarveyLawson10p}.

Kim and McCann showed
that any $c$-optimal diffeomorphism $G:\X \longrightarrow \Y$ has a graph which
is $\omega$-Lagrangian in addition to being $h$-spacelike.  Conversely,  when
\Azero--\Afour\ hold,  then any diffeomorphism with an $\omega$-Lagrangian and
$h$-spacelike graph is necessarily $c$-optimal \cite{KimMcCann10}.  Here a submanifold
$S \subset N$ is called $\omega$-Lagrangian if $\omega(P,Q)=0$ for every
pair of tangent vectors $P,Q \in T_{(x_0,y_0)}N$.  Being $\omega$-Lagrangian
is essentially the integrability condition which asserts closure of the form
$D_x c|_{(x,G(x))}$ on $M^+$;  it amounts to equality of the cross-derivatives
$\partial G^i/\partial x^j = \partial G^j/\partial x^i$ which imply the existence of $u$
such that $G(x) = Du(x)$ in case $c(x,y) = - x \cdot y$.

So far these geometric structures --- the pseudo-metric $h$, symplectic form $\omega$,
$c$-cyclical monotonicity, and $c$-optimality --- reflect only the cost function $c(x,y)$,
and not the densities $d\mu^\pm(x) = f^\pm(x) dx$.  Remarkably, however,  there is a
conformally equivalent pseudo-metric
$$\tilde h(x_0,y_0) = \left(\frac{f^+(x_0) f^-(y_0)}{|\det \partial^2 c/\partial x^i \partial y^j|} \right)^{1/n} h(x_0,y_0)
$$
for which the graph $\graph(G)$ of an optimal mapping $G_\# \mu^+ = \mu^-$ turns out to
be a zero mean curvature surface  ---
and in fact $\tilde h$-volume maximizing among homologous
surfaces. This surprising connection of optimal transportation to
geometric measure theory was discovered with Kim and Warren \cite{KimMcCannWarren10}.

Thus the properties of optimal maps relate to both sectional and mean curvatures with
respect to $\tilde h$.  On the other hand,  in the special case of the quadratic
cost $c = d^2$ on a Riemannian manifold $M=M^\pm$,
several surprising connections relate optimal
transportation to the Riemannian geometry of $(M,g_{ij})$.
For example,  in this case Loeper and Villani conjecture \cite{LoeperVillani10}
--- and in some cases have proved
--- \Athrees\ implies {\em convexity} of the tangent injectivity locus,  which is to say the cut
locus of each given point $x_0 \in M$, lifted to the tangent space $T_{x_0} \X$
by the Riemannian exponential $\exp_{x_0}^{-1}$.

An earlier development involved
lifting the metrical distance $d$ from $M$ to the space $P(M)$ of Borel probability measures
$\mu^\pm \in P(M)$ using the minimal transportation cost
$d_2(\mu^+,\mu^-)= \sqrt{cost(\gamma)}$ with respect to distance squared
$c=d^2$ \cite{BenamouBrenier00} \cite{Dudley76} \cite{Otto01}.
Geodesic convexity of various entropy
functionals on $P(M)$ turns out to be equivalent to Ricci non-negativity of $(M,g)$.
This was shown by von Renesse and Sturm \cite{vonRenesseSturm05},  building on work of myself \cite{McCann97},
%Cordero-Erausquin \cite{Cordero-Erausquin99S},
Cordero-Erausquin, Schmuckenschl\"ager and
I \cite{CorderoMcCannSchmuckenschlager01}, and Otto and Villani \cite{OttoVillani00}.
This idea was turned on its head by Lott-Villani \cite{LottVillani09} and independently
Sturm \cite{Sturm06ab},  who used geodesic convexity of the same entropies
to {\em define} Ricci non-negativity in (not necessarily smooth) metric-measure spaces.
This non-negativity is stable under measured Gromov-Hausdorff convergence, and
has significant consequences.

\appendix

\section{Ma-Trudinger-Wang conditions}\label{A:MTW}

The conditions \Azero-\Afour\ above have been synthesized
%in the text above have been reformulated here
in a language
selected to manifest their topological and geometric invariance --- aspects not
readily apparent \cite{Bourguignon09}
from the original formulation by Ma, Trudinger, and Wang \cite{MaTrudingerWang05}
in coordinates on the bounded sets $M^\pm \subset \R^n$, as we now recall.

Use subscripts such as $i$ and $j$ to denote derivatives with respect
to $x^i$ and $y^j$,  and commas to separate derivatives
%with respect to coordinates
in $\X$ from those %with respect to coordinates
in $M^-$,  so
that $c_{i,j} = \partial^2 c/\partial x^i \partial y^j$ and
$c_{ij.kl}= \partial^4 c/\partial x^i \partial x^j\partial y^k y^l$, etc.
Also let $c^{k,l}$ denote the matrix inverse of $c_{i,j}$,
and let $D_x c(x,y) = (c_1,c_2,\ldots,c_n)(x,y)$.
Then the original conditions of Ma, Trudinger and Wang were formulated
as the existence of a constant $C_0>0$ such that: \smallskip

\Azero$'$\  $c \in C^4(\bar N)$, and for each
$(x_0,y_0) \in \bar N = \bar M^+ \times \bar M^- \subset \R^{n} \times \R^n$:
\smallskip

\Aone$'_+$ the map $y\in \bar M^- \longmapsto D_x c(x_0,y) \in T_{x_0}^*\X$ is injective;
\smallskip

\Aone$'$  both $c(x,y)$ and $c^*(y,x):= c(x,y)$ satisfy \Azero$'$ and \Aone$'_+$;
\smallskip

\Atwo$'$ $\det c_{i,j}(x_0,y_0) \ne 0$;
\smallskip

\Athree$'_{\mathbf s}$ $(-c_{ij,kl} + c_{ij,m} c^{m,n} c_{kl,n})p^i q^j p^k q^l \ge C_0|p|^2|q|^2$
whenever $p^i c_{i,j} q^j =0$;
\smallskip

%\Athree$'$ the previous line holds but with $C_0=0$ instead of $C_0>0$;

\Afour$'$ the sets $D_x c (x_0,M^-) \subset \R^n$ and $D_y c(M^+,y_0) \subset \R^n$ are convex.
\smallskip \\
Here the Einstein summation convention is in effect,  and $|p|$ and $|q|$ denote the
Euclidean norm on $p\in T_{x_0}\X$ and $q \in T_{y_0}\Y \subset \R^n$ respectively.

Their method is heavily based on a priori $C^2$ estimates,
which require a maximum principle for the directional second derivatives
$D^2_{pp}u:=u_{ij}p^i p^j$
of the unknown maximizers $u^\pm \in C(M^\pm)$ for the dual problem \eqref{dual}.
A second-order linear elliptic equation
satisfied by $D^2_{pp}u$ is obtained by twice differentiating the prescribed Jacobian
equation for the map $G$, which is a fully nonlinear
Monge-Amp\`ere type equation for the potential $u=u^+$.
Condition \Athrees$'$ ensures the zeroth order term in the elliptic
equation satisfied by $D^2_{pp}u$ has a coefficient with the correct sign
to admit a maximum principle.

The relaxation \Athree$'$ of $C_0>0$ to $C_0=0$ and strengthening
\Afours$'$ which requires all principal curvatures of $D_x c (x_0,M^-)$ and
$D_y c(M^+,y_0)$ to be positive was introduced in the subsequent investigation of
boundary regularity by
Trudinger and Wang~\cite{TrudingerWang09b}.  We leave it as an exercise to the reader
to confirm the equivalence of each primed hypothesis \Azero$'$-\Afour$'$ and their variants
to the corresponding unprimed hypothesis in the text.  The connection of these conditions to
the Riemann curvature tensor
\begin{equation}\label{Riemann tensor}
\sec^{(\bar N,h)}_{(x_0,y_0)} (p \oplus \0) \wedge (\0 \oplus q) =
(-c_{ij,kl} + c_{ij,m} c^{m,n} c_{kl,n})p^i q^j p^k q^l
\end{equation}
and geodesic equations for the pseudo-metric $h=\frac{1}{2}$Hess$_{(x_0,y_0)}\delta^0$
was first discovered in my joint work with Kim \cite{KimMcCann10}.
However, the link to the cross-difference $\delta^0(x,y)$ originates
in the present work.

%\bibliographystyle{plain}
%\bibliography{../newbib}

%\end{document}

\end{document}